\definecolor{webgreen}{rgb}{0,.5,0}
\definecolor{webbrown}{rgb}{.6,0,0}
\def\Enn{\mathbb{N}}
\def\Zee{\mathbb{Z}}
\DeclareMathOperator\cl{CL}
\title{Length of the continued logarithm algorithm on rational inputs}
\author{Jeffrey Shallit \\
School of Computer Science \\
University of Waterloo \\
Waterloo, ON  N2L 3G1 \\
Canada \\
{\tt shallit@cs.uwaterloo.ca} }
\begin{document}

\maketitle

\theoremstyle{plain}
\newtheorem{theorem}{Theorem}
\newtheorem{corollary}[theorem]{Corollary}
\newtheorem{lemma}[theorem]{Lemma}
\newtheorem{proposition}[theorem]{Proposition}

\theoremstyle{definition}
\newtheorem{definition}[theorem]{Definition}
\newtheorem{example}[theorem]{Example}
\newtheorem{conjecture}[theorem]{Conjecture}

\theoremstyle{remark}
\newtheorem{remark}[theorem]{Remark}

\begin{abstract}
The continued logarithm algorithm was introduced by Gosper around
1978, and recently studied by Borwein, Calkin, Lindstrom, and
Mattingly.  In this note I show that the continued logarithm
algorithm terminates in at most $2 \log_2 p + O(1)$ steps on input
a rational number $p/q \geq 1$.  Furthermore, this bound is tight,
up to an additive constant.
\end{abstract}

\section{Introduction}

Let $\Zee$ denote the integers,
$\Enn$ denote the non-negative integers, and
$\Enn_{>0}$ denote the positive integers.

The continued fraction algorithm, which expands every real
number $x$ in an expression of the form
\begin{equation*}
x = a_0+{1\over\displaystyle a_1+
	{\strut 1\over\displaystyle a_2+
	  {\strut 1\over\displaystyle a_3+ 
\raisebox{-1ex}{$\ddots$} }}},
\end{equation*}
with $a_0 \in \Zee$ and $a_i \in \Enn_{>0}$ for $i \geq 1$,
has been extensively studied, in part because of its relationship
to the Euclidean algorithm.  In particular, it is known that this
expression is essentially unique,
and terminates with final term $a_n$ if and only if $x$ is a rational
number.  In this case, if $x = p/q$, 
the length of the expansion
is at most $O(\log pq)$, as has been known since the 1841
work of Finck \cite{Shallit:1994}.
Furthermore, examples
achieving this bound are known.

Around 1978, Gosper \cite{Gosper:1978} introduced an analogue of the
continued fraction algorithm for real numbers, called the {\it continued
logarithm algorithm}, which expands
every real number $x \geq 1$ in an expression of the form
\begin{equation}
x = 2^{k_0}(1 + {1\over\displaystyle 2^{k_1} (1 +
        {\strut 1\over\displaystyle 2^{k_2} ( 1 +
	          {\strut 1\over\displaystyle 2^{k_3} (1 + 
		  \raisebox{-1ex}{$\ddots$}) } ) } ) } ) \quad .
\label{clog}
\end{equation}
where $k_i \in \Enn$ for $i \geq 0$.
More recently, this algorithm was studied by
Brabec \cite{Brabec:2006,Brabec:2007,Brabec:2010} and
Borwein, Calkin, Lindstrom, and Mattingly \cite{Borwein:2016}.

Once again, it is known that this expression is essentially unique
and
terminates if and only if
$x$ is rational.  However, up to now, no estimate of the length of the
expansion has been given.  In this note, we provide such an estimate.

\section{The continued logarithm algorithm}

As described by Borwein,  Calkin, Lindstrom, and Mattingly \cite{Borwein:2016},
the continued logarithm algorithm can be described as follows:
for $x > 1$ we define
$$ g(x) = \begin{cases} 
	{x \over 2}, & \text{if $x \geq 2$}; \\
	{1 \over {x-1}}, & \text{if $1 < x < 2$}.
	\end{cases}
$$
The algorithm proceeds by iterating $g$ until the result is $1$;
the division steps $x \rightarrow x/2$ are done repeatedly until the 
transformation $x \rightarrow {1 \over {x-1}}$ is used, or $x = 1$.  In the latter
case the algorithm terminates.  The number of division steps is given
by the number of $k$'s in the expansion (\ref{clog}).
For example, 
\begin{equation}
{{96} \over 7}  = 2^{3}(1 + {1\over\displaystyle 2^{0} (1 +
        {\strut 1\over\displaystyle 2^{1} ( 1 +
	          {\strut 1\over\displaystyle 2^{2} })})}) \quad .
\end{equation}
We can abbreviate the expression (\ref{clog}) by
writing $x = \langle k_0, k_1, \ldots, k_n \rangle $.
So ${{96} \over 7} = \langle 3,0,1,2 \rangle$.  
Similarly, $2^k = \langle k \rangle$ for $k \geq 0$.  Since the
continued logarithm expansion
is unique, we can regard an expression like
$x = \langle k_0, k_1, \ldots, k_n \rangle $ as {\it either\/} an evaluation
of a certain function on the right, or as a statement about
the output of the continued
logarithm algorithm on an input $x$.  We trust there will be no confusion
on the proper interpretation in what follows.

There are two different natural measures of the complexity of the
algorithm on rational inputs.  The first is the number of steps $n+1$ in
$x = \langle k_0, k_1, \ldots, k_n \rangle $, which we
write as $L(x)$.    The second is the {\it total\/} number of division
steps $k_0 + k_1 + \cdots + k_n$, which we write as $T(x)$.
In this note we get asymptotically tight bounds for $L$ and $T$
on rational numbers $p/q \geq 1$.

\section{The bound on $L$}

Consider performing the continued logarithm algorithm $\cl$ on a
rational input ${p \over q} \geq 1$, getting back
${p \over q} = \langle k_0, k_1, \ldots, k_n \rangle $.
We can associate a rational number ${p \over q}$ with the pair $(p,q)$.
While this association is not unique (for example, $2$ can be represented
by $(2,1)$ or $(4,2)$), it does not create problems in what follows.
By consolidating the division steps, we can
express the continued logarithm algorithm on rational numbers as
a function of two integers that takes its value on finite lists,
as follows
\begin{equation}
\cl (p,q)  = \begin{cases}
	k, & \text{if $p = 2^k q$ for some $k \geq 0$}; \\
	k, \cl(2^k q, p - 2^k q),
		& \text{if $1 < {p \over {2^k q}} < 2$}; 
	\end{cases}
\end{equation}
Here the comma denotes concatenation.

The idea of our bound on $L$ is to consider how the measure $f(p,q) =
p^2 + q^2$ changes as the algorithm proceeds.

In our interpretation of the algorithm on pairs $(p,q)$,
it replaces $(p,q)$ with
$(p', q')$, where $p' = 2^k q$ and $q' = p - 2^k q$ for
$1 \leq {p \over {2^k q}} < 2$, and terminates when $q = 0$.
First we show that $f(p,q)$ strictly decreases in
each step of the algorithm. 

\begin{lemma}
If the continued logarithm algorithm takes $(p,q)$ to $(p',q')$, then
$f(p', q') < f(p,q)$.
\end{lemma}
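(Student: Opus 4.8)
The plan is to translate the transformation into explicit formulas and then carry out a short algebraic estimate. A recursive (non-terminating) step of the algorithm on $(p,q)$ occurs exactly when $q \geq 1$ and $2^k q \leq p < 2^{k+1} q$ for some $k \geq 0$, and it produces $p' = 2^k q$ and $q' = p - 2^k q$. First I would record the defining inequalities of the step: $p \geq 2^k q$ (which gives $q' \geq 0$) and $p < 2^{k+1} q$ (which gives $q' < p'$); in fact, since a step producing $(p',q')$ is by definition not the terminating case $p = 2^k q$, one even has $0 < q' < p'$, but for this lemma only $p \geq 2^k q$ and $q \geq 1$ will be used.

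Next I would abbreviate $a = 2^k q = p'$ and expand the target quantity:
$$ f(p',q') = a^2 + (p-a)^2 = 2a^2 - 2ap + p^2. $$
Subtracting $f(p,q) = p^2 + q^2$ then yields
$$ f(p',q') - f(p,q) = 2a^2 - 2ap - q^2 = 2a(a-p) - q^2. $$
Finally I would observe that $a = 2^k q \leq p$ by the defining inequality of the step, so $2a(a-p) \leq 0$, while $q$ is a positive integer (the algorithm has not terminated), so $q^2 \geq 1$. Hence $f(p',q') - f(p,q) \leq -q^2 \leq -1 < 0$, which is the assertion.

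The one point deserving any attention is the strictness of the inequality, and this comes entirely from $q \geq 1$, i.e.\ from the fact that a step producing a successor pair is, by definition, not a terminating step. Note also that nothing in the argument depends on which representative $(p,q)$ of the rational number is chosen, so the computation is unaffected by that (non-unique) choice. I do not anticipate any genuine obstacle: the real content of the lemma is the selection of the quadratic potential $f(p,q) = p^2 + q^2$, after which the verification is an immediate one-line estimate, and this decreasing-potential form is precisely what will later feed into the bound on $L$.
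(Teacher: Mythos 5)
Your proof is correct and is essentially the same argument as the paper's: both reduce to the inequality $2^{2k+1}q^2 - 2^{k+1}pq - q^2 < 0$, obtained from $p \geq 2^k q$ together with $q \geq 1$ (the paper packages the $-q^2$ term as the strict inequality $2^k > 2^k - 1/2^{k+1}$, while you isolate it directly in the difference $f(p',q')-f(p,q) = 2a(a-p) - q^2$). Your organization of the algebra is if anything slightly cleaner, but there is no substantive difference in approach.
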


\begin{proof}
From the inequality ${p \over {q \cdot 2^k}} \geq 1$ we get
$$ {p \over q} \geq 2^k > 2^k - {1 \over {2^{k+1}}} .$$
Multiplying by
$2^{k+1} q^2$, we get
$ 2^{k+1} pq > (2^{2k+1} - 1) q^2 $.
Adding $p^2$ to both sides and rearranging gives
\begin{align*}
p^2 + q^2 &> p^2 - 2^{k+1} pq + 2^{2k+1} q^2  \\
&= (2^k q)^2 + (p-2^k q)^2 \\
& = (p')^2 + (q')^2,
\end{align*}
as desired.
\label{one}
\end{proof}

Next we show how $f$ decreases as the algorithm proceeds.
We use the notation $(p,q) \rightarrow^k (p', q')$ to denote that
one step of the algorithm replaces $p/q$ with $p'/q'$, where
$p' = 2^k q$ and $q' = p - 2^k q$.

\begin{lemma}
If $(p,q) \rightarrow^0 (p', q')$
then $f(p',q') \leq f(p, q)/2$.
\label{step0}
\end{lemma}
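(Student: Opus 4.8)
The plan is to unwind the definitions and reduce the claimed inequality to an elementary quadratic inequality in $p$ and $q$. By the meaning of the notation $(p,q) \rightarrow^0 (p',q')$, the exponent is $k=0$, so $p' = 2^0 q = q$ and $q' = p - 2^0 q = p - q$; moreover, a step of type $k=0$ is taken precisely when $1 \leq p/q < 2$, i.e.\ when $q \leq p < 2q$. Note also that $q > 0$, since otherwise the algorithm would already have terminated.

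With these substitutions in hand, I would simply compute
\[
f(p',q') = (p')^2 + (q')^2 = q^2 + (p-q)^2 = p^2 + 2q^2 - 2pq .
\]
Thus the desired bound $f(p',q') \leq f(p,q)/2 = (p^2 + q^2)/2$ is equivalent, after multiplying through by $2$ and collecting terms, to $p^2 - 4pq + 3q^2 \leq 0$, i.e.\ to $(p-q)(p-3q) \leq 0$.

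It remains to verify this last inequality using the range $q \leq p < 2q$ guaranteed by the $k=0$ step. The first factor satisfies $p - q \geq 0$, while the second satisfies $p - 3q < 2q - 3q = -q < 0$; hence their product is nonpositive, as required.

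I do not anticipate any genuine difficulty here. The only place calling for a little care is extracting the hypothesis $q \leq p < 2q$ from the definition of a $k=0$ step, and checking that the boundary case $p = q$ (which makes $q' = 0$ and so forces termination at the next step) is consistent with the non-strict inequality in the statement. The remainder is routine algebra.
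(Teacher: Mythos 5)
Your proof is correct and follows essentially the same route as the paper: both reduce the claim to the quadratic inequality $p^2 - 4pq + 3q^2 \leq 0$ and verify it by factoring (the paper via $(c-1)(c-3)$ with $c = p/q$, you via $(p-q)(p-3q)$ directly). Your handling of the boundary case $p = q$ is, if anything, slightly cleaner than the paper's, which discards it by noting the algorithm terminates.
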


\begin{proof}
The condition $k = 0$ implies
$p' = q$ and $q' = p - q$.
Then $ 1 \leq {p \over q} < 2$.  If ${p \over q} = 1$ then
the algorithm terminates, so assume ${ p \over q } > 1$.
Write ${p \over q} = c$.
If $ 1 < c \leq 2$, then $(c-1)(c-3) < 0$.  
Multiplying by $q^2$ gives
$q^2(c^2 - 4c + 3) < 0$.
Hence, using the fact that $p = cq$, we get
$p^2 - 4pq + 3q^2 < 0$.    Dividing by $2$ and rearranging,
we get
$f(p', q') = q^2 + (p-q)^2 < {1 \over 2} (p^2 + q^2) < {1 \over 2} f(p,q)$.
\end{proof}

\begin{lemma}
If two steps of the continued logarithm algorithm are
$(p,q) \rightarrow^k (p', q') \rightarrow^0 (p'', q'')$ with $k \geq 1$
then $f(p'', q'') < {1 \over 4} f(p,q)$.
\label{stepk0}
\end{lemma}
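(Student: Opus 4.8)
The plan is to collapse the two steps into a single real parameter and reduce the claim to a one-variable quadratic inequality. Since the first step is $(p,q) \rightarrow^k (p',q')$, the algorithm guarantees $1 < \frac{p}{2^k q} < 2$ (it is $>1$ because the algorithm did not terminate at this stage, and $<2$ because exactly $k$ halvings were performed). Set $c = \frac{p}{2^k q}$, so that $p' = 2^k q$ and $q' = p - 2^k q = (c-1)\,2^k q$. For the second step to be a $\rightarrow^0$ step we need $\frac{p'}{q'} = \frac{1}{c-1} \in [1,2)$, i.e.\ $\tfrac12 < c-1 \le 1$; together with $c<2$ this pins $c$ to the open interval $(\tfrac32,2)$. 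Carrying out the second step then gives $p'' = q' = (c-1)\,2^k q$ and $q'' = p' - q' = (2-c)\,2^k q$.

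Next I would rewrite both sides of the desired inequality in terms of $c$ and $2^k q$:
$$ f(p'',q'') = (2^k q)^2\bigl((c-1)^2 + (2-c)^2\bigr), \qquad f(p,q) = c^2 (2^k q)^2 + q^2 . $$
Since $q \neq 0$ we have $f(p,q) > c^2 (2^k q)^2$, so it suffices to prove the cleaner estimate $(c-1)^2 + (2-c)^2 \le \tfrac14 c^2$ — the strict inequality we actually need is then supplied by the discarded $q^2$ term. Clearing denominators, this is equivalent to $7c^2 - 24c + 20 \le 0$, which factors as $(7c-10)(c-2) \le 0$, i.e.\ $c \in [\tfrac{10}{7}, 2]$. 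As $(\tfrac32,2) \subseteq [\tfrac{10}{7},2]$, the estimate holds on the relevant range, and the lemma follows.

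There is no serious obstacle here; the one point requiring care is the range of $c$, and in particular the observation that demanding a genuine $\rightarrow^0$ second step is exactly what forces $c > \tfrac32$, which is precisely what makes the quadratic non-positive — so the two halves of the argument are genuinely coupled. A naive approach does not suffice: combining Lemma~\ref{one} with Lemma~\ref{step0} gives only $f(p'',q'') \le \tfrac12 f(p',q') < \tfrac12 f(p,q)$, and even tracking the decrease in the first step exactly leaves a residual factor $2^{2k}(c-2)^2$ that is not less than $1$ for large $k$; the single computation on the composed two-step map seems to be what is needed. (Incidentally, the hypothesis $k \ge 1$ plays no role in this estimate; it is there only for the later bookkeeping when this lemma is assembled with the others.)
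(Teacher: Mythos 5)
Your proof is correct and is essentially the paper's own argument: both reduce the two-step map to the single parameter $c = p/(2^kq) \in (3/2,2)$ and rest on the same quadratic $7c^2-24c+20=(7c-10)(c-2)\le 0$ on $[10/7,2]$, with the discarded $q^2$ term supplying strictness. Your normalization by $(2^kq)^2$ rather than carrying $p,q,k$ through explicitly, and your (accurate) observation that $k\ge 1$ is not actually used, are only presentational differences.
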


\begin{proof}
The first step implies that $1 < {p \over {2^k q}} < 2$, and
$p' = 2^k q$, $q' = p - 2^k q$.  The second step implies that
$1 < {{2^k q} \over {p - 2^k q}} < 2$ and
$p'' = p - 2^k q$, $q'' = 2^{k+1} q - p$.

Define $c = {p \over {2^k q}}$ and observe that the inequalities
of the previous paragraph imply $3/2 \leq c \leq 2$.
Consider the polynomial $h(c) = 7c^2 -24c + 20$.  Since the roots
of this polynomial are $10/7$ and $2$, we clearly 
see that $h(c) \leq 0$ for $3/2 \leq c \leq 2$.  Hence
$q^2 (7c^2 - 24c + 20) 2^{2k} < q^2$ for $q \geq 1$ and $k \geq 1$.
Substituting $c = {p \over {2^k q}}$ and simplifying gives
$7p^2 - 24 \cdot 2^k pq + 20 \cdot 2^{2k} q^2 < q^2$.  Adding $p^2$ to both
sides and then dividing by $4$ gives
$2p^2 - 6 \cdot 2^k pq + 5 \cdot 2^{2k} q^2 < {1 \over 4}(p^2 + q^2)$.
But the left side of this inequality is
$(p-2^k q)^2 + (q 2^{k+1} - p)^2$.   Thus we have proved
$(p'')^2 + (q'')^2 < {1 \over 4}(p^2 + q^2)$.
\end{proof}

\begin{lemma}
If two steps of the continued logarithm algorithm are
$(p,q) \rightarrow^k (p', q') \rightarrow^{k'} (p'', q'')$
with $k, k' \geq 1$, then $p''$ and $q''$ are both divisible by $2$, and
$f(p''/2, q''/2) < {1 \over 4} f(p,q)$.
\label{stepkk}
\end{lemma}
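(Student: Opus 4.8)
The plan is to mimic the structure of Lemma~\ref{stepk0}, but now with $k' \geq 1$ division steps in the second stage rather than just one. First I would set up the two steps explicitly: from $(p,q) \rightarrow^k (p',q')$ with $k \geq 1$ we have $p' = 2^k q$, $q' = p - 2^k q$, and from $(p',q') \rightarrow^{k'} (p'',q'')$ with $k' \geq 1$ we have $p'' = 2^{k'} q' = 2^{k'}(p - 2^k q)$ and $q'' = p' - 2^{k'} q' = 2^k q - 2^{k'}(p - 2^k q)$. The divisibility claim is then immediate: since $k' \geq 1$, $p'' = 2^{k'}(p-2^k q)$ is divisible by $2$; and since $k \geq 1$ as well, both $2^k q$ and $2^{k'}(p-2^k q)$ are even, so $q''$ is even too. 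So $p''/2 = 2^{k'-1}(p-2^k q)$ and $q''/2 = 2^{k-1} q - 2^{k'-1}(p-2^k q)$ are integers.

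Next I would bound $f(p''/2, q''/2)$. Introduce the same normalized variable $c = p/(2^k q)$; the constraint from the first step is $1 < c < 2$, and the constraint from the second step, $1 < q'/q'' \cdot (\text{something})$... more precisely $1 \le 2^k q / (p - 2^k q) \cdot 2^{-k'}$-type inequalities, will pin down the range of $c$ further, though possibly not as tightly as in Lemma~\ref{stepk0} since $k'$ is now a free parameter. Writing $p''/2 = 2^{k'-1} q(c-1)$ and $q''/2 = 2^{k-1} q - 2^{k'-1} q (c-1) = q 2^{k-1}(1 - 2^{k'-k}(c-1))$, I would compute
\[
f(p''/2,q''/2) = q^2 2^{2k'-2}(c-1)^2 + q^2 2^{2k-2}\bigl(1 - 2^{k'-k}(c-1)\bigr)^2,
\]
and compare this to $f(p,q)/4 = (q^2 c^2 2^{2k} + q^2)/4 \ge q^2 2^{2k-2} c^2$ (or keep the $+q^2/4$ term). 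Dividing through by $q^2 2^{2k-2}$, the desired inequality $f(p''/2,q''/2) < \tfrac14 f(p,q)$ reduces to a polynomial inequality in $c$ with $2^{k'-k}$ as an auxiliary parameter, which I would check is negative on the relevant range of $c$ by a root computation, exactly as $h(c) = 7c^2 - 24c + 20$ was handled before.

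The main obstacle I anticipate is controlling the extra factor $2^{k'-k}$: unlike Lemma~\ref{stepk0}, where $k'=0$ was fixed, here both $k$ and $k'$ range over all positive integers, so the inequality must hold uniformly. I expect the cleanest route is to observe that the worst case occurs at a boundary value of the ratio $2^{k'-k}$ — intuitively the second-step constraint $1 < 2^k q/q' < 2$ (i.e.\ $1 < 1/(c-1) < 2$, giving $3/2 < c < 2$) forces $q' < 2^k q$, which together with $1 < 2^{-k'} p'/q'' $ will bound $2^{k'-k}(c-1)$ into a fixed interval independent of $k,k'$; once that composite quantity, say $t = 2^{k'-k}(c-1)$, is confined to a bounded range, $f(p''/2,q''/2)/ (q^2 2^{2k-2})$ becomes a function of $c$ and $t$ alone, and the inequality follows from a two-variable (or, after eliminating, one-variable) check of the sign of a quadratic on a box. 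If that reduction turns out to be fiddly, the fallback is simply to bound $2^{2k'-2}(c-1)^2 < \tfrac14$ directly using $c < 2$ and $2^{k'}(c-1) < 1$ (which is exactly the second-step lower-bound condition $2^k q /q' > 1$ rephrased), and to bound the second square crudely, absorbing everything into $\tfrac14(p^2+q^2)$ with room to spare from the $q^2/4$ term we dropped.
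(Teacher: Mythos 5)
Your divisibility argument is correct and matches the paper's. For the inequality, however, you have missed the observation that makes this lemma essentially free, and the computational route you propose instead is both unfinished and, in the form described, would not close. The paper's proof uses no polynomial analysis at all: since $f(a,b)=a^2+b^2$ is homogeneous of degree two, $f(p''/2,q''/2)=\frac{1}{4}f(p'',q'')$ identically, and Lemma~\ref{one} (each step strictly decreases $f$) applied to each of the two steps gives $f(p'',q'')<f(p',q')<f(p,q)$. The factor $\frac{1}{4}$ is harvested entirely from the division by $2$ --- that is the whole point of establishing divisibility first --- and no analogue of $h(c)=7c^2-24c+20$ is needed. Lemma~\ref{stepk0} needs the quadratic analysis precisely because no factor of $2$ can be extracted there.

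There are also two concrete problems with the computation you sketch. First, an algebra slip: $p-2^kq=2^kq(c-1)$, so $p''/2=2^{k+k'-1}q(c-1)$, not $2^{k'-1}q(c-1)$; the quantity that the second step confines to the interval $(\frac{1}{2},1)$ is $t=2^{k'}(c-1)$, not $2^{k'-k}(c-1)$. Second, your fallback of bounding the two squares separately does not suffice: with the corrected algebra one gets $(p''/2)^2=2^{2k-2}q^2t^2<2^{2k-2}q^2$ and $(q''/2)^2=2^{2k-2}q^2(1-t)^2<2^{2k-4}q^2$, whose sum is $\frac{5}{16}\cdot 2^{2k}q^2$; since $k\geq 1$ forces $p\geq 2q$, and $c$ can be arbitrarily close to $1$ when $k'$ is large, this bound can exceed $\frac{1}{4}(p^2+q^2)$. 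One must exploit the anticorrelation of the two terms, e.g.\ via $t^2+(1-t)^2<1$ for $t\in(\frac{1}{2},1)$ --- which, unwound, is exactly Lemma~\ref{one} applied to the second step. So your plan is repairable, but as written the central inequality is deferred to a ``check'' that, done the crude way you describe, fails.
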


\begin{proof}
In that case the
first step replaces $(p,q)$ by $(p', q') = (q\cdot 2^k, p- q \cdot 2^k)$, and
the second step replaces this latter pair with
$$(p'', q'') =
( (p- q \cdot 2^k) 2^{k'}, q (2^{k+k'} + 2^k) - p \cdot 2^{k'} ).$$
Now both elements of this latter pair are divisible by $2^{\min(k,k')} \geq 2$.
Since these correspond to numerator and denominator, we can
divide both elements of the pair by $2$ and obtain an equivalent pair of
integers $(p''/2, q''/2)$.  Note that 
$\cl(p'', q'') = \cl(p''/2, q''/2)$.
By Lemma~\ref{one} we have $f(p''/2, q''/2) = {1 \over 4} f(p'', q'') <
{1 \over 4} f(p',q') < {1 \over 4} f(p,q)$, as desired.
\end{proof}

\begin{theorem}
On input $p/q \geq 1$ the continued logarithm algorithm uses
at most $2 \log_2 p + 2$ steps.
\label{bnd}
\end{theorem}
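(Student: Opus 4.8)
\section*{Proof proposal}

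The plan is to prove, by strong induction on the integer quantity $f(p,q) = p^2 + q^2$, the slightly sharper estimate
$$ L(p/q) \leq \log_2(p^2 + q^2) + 1 \qquad \text{for all integers } p \geq q \geq 1 . $$
Theorem~\ref{bnd} is then immediate: $p/q \geq 1$ forces $q \leq p$, hence $p^2 + q^2 \leq 2p^2$, so $\log_2(p^2+q^2) + 1 \leq \log_2(2p^2) + 1 = 2\log_2 p + 2$. It is essential to run the induction with the potential $\log_2(p^2+q^2) = 2\log_2\sqrt{f}$ rather than $2\log_2 p$ itself: a lone $k = 0$ step only guarantees (Lemma~\ref{step0}) that $f$ drops by a factor of $2$, and in the $\log_2 f$ scale that exactly pays for the one step it costs, whereas a naive induction on $2\log_2 p$ would leak an additive constant at each such step.

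For the base case, if $(p,q)$ is a terminating configuration --- that is, $p = 2^k q$ for some $k \geq 0$ --- then $\cl(p,q)$ is a one-element list, so $L(p/q) = 1 \leq \log_2(p^2+q^2) + 1$ since $p^2 + q^2 \geq 2$. Otherwise the first step is $(p,q) \rightarrow^{k_0} (p_1,q_1)$ with $p_1 = 2^{k_0} q$ and $q_1 = p - 2^{k_0}q \geq 1$, and $L(p/q) = 1 + L(p_1/q_1)$; the inductive hypothesis will always be applicable because $f$ strictly decreases (Lemma~\ref{one}). I would then split according to $k_0$ and, when $k_0 \geq 1$, according to the exponent of the next step, so as to match exactly the four situations of Lemmas~\ref{one}--\ref{stepkk}. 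If $k_0 = 0$, then $(p_1,q_1) = (q,p-q)$ still satisfies $p_1 > q_1 \geq 1$, Lemma~\ref{step0} gives $f(p_1,q_1) < f(p,q)/2$, and the inductive hypothesis yields $L(p/q) = 1 + L(p_1/q_1) < 1 + (\log_2 f(p,q) - 1) + 1 = \log_2 f(p,q) + 1$. If $k_0 \geq 1$ and $(p_1,q_1)$ is already terminating, then $L(p/q) = 2 \leq \log_2 f(p,q) + 1$ since $f(p,q) \geq 2$. If $k_0 \geq 1$ and the next step has exponent $0$, i.e.\ $(p,q)\rightarrow^{k_0}(p_1,q_1)\rightarrow^{0}(p_2,q_2)$, then Lemma~\ref{stepk0} gives $f(p_2,q_2) < f(p,q)/4$, and once $(p_2,q_2)$ is seen to be admissible the inductive hypothesis gives $L(p/q) = 2 + L(p_2/q_2) < 2 + (\log_2 f(p,q) - 2) + 1 = \log_2 f(p,q) + 1$. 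Finally, if $k_0 \geq 1$ and the next step has exponent $k' \geq 1$, i.e.\ $(p,q)\rightarrow^{k_0}(p_1,q_1)\rightarrow^{k'}(p_2,q_2)$, then Lemma~\ref{stepkk} says $p_2$ and $q_2$ are both even, that $\cl(p_2,q_2) = \cl(p_2/2,q_2/2)$, and that $f(p_2/2,q_2/2) < f(p,q)/4$; since $L(p_2/q_2) = L\bigl((p_2/2)/(q_2/2)\bigr)$, applying the inductive hypothesis to the admissible pair $(p_2/2,q_2/2)$ closes the estimate exactly as in the preceding case.

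The inequality-chasing above is routine once the lemmas are in hand; the one step that needs real care --- and the main obstacle --- is checking, in the last two cases, that the pair handed to the inductive hypothesis still lies in the region $\{(P,Q) : P \geq Q \geq 1\}$, so that the induction is genuinely self-contained. Writing $c = p/(2^{k_0}q) \in (1,2)$, the branch conditions of the algorithm translate into $3/2 < c < 2$ in the exponent-$0$ case, whence $p_2 = p - 2^{k_0}q$ and $q_2 = 2^{k_0+1}q - p$ are positive integers with $p_2/q_2 = (c-1)/(2-c) > 1$; and into $1/2 < 2^{k'}(c-1) < 1$ in the exponent-$k'$ case, whence (after halving, using the divisibility from Lemma~\ref{stepkk}) $p_2/2$ and $q_2/2$ are positive integers with ratio $> 1$. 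With this admissibility bookkeeping done, the induction --- and hence the theorem --- is complete.
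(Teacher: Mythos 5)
Your proposal is correct and is essentially the paper's own argument recast as a strong induction on $f(p,q)=p^2+q^2$: it uses the same potential function, the same target bound $\log_2(p^2+q^2)+1$, and the same case analysis (a lone exponent-$0$ step handled by Lemma~\ref{step0}, a step with exponent $k\geq 1$ grouped with its successor and handled by Lemma~\ref{stepk0} or Lemma~\ref{stepkk}, with the possibly ungrouped final term absorbed into the base case). The paper phrases this as a left-to-right grouping of the expansion rather than an induction, but the content is identical, and your extra admissibility bookkeeping is a harmless (if slightly more careful) addition.
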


\begin{proof}
Consider the continued logarithm expansion and process it from left
to right as follows:  if a term is $0$, use Lemma~\ref{step0}.
If a term is $ k \geq 1$, group it with the term that follows and use
either Lemma~\ref{stepk0} or Lemma~\ref{stepkk}.  By doing so we
group all terms except possibly the last.  Lemma~\ref{step0} shows
that a single step reduces $f$ by a factor of $2$.
Lemmas~\ref{stepk0} and \ref{stepkk} show that two steps reduce $f$
by a factor of $4$.  Thus the total number of steps on input
$(p,q)$ is at most $\log_2 (p^2 + q^2) + 1$, where the $+1$ term
takes into account the last term that might be ungrouped.

So the algorithm uses at most $\log_2 (p^2 + q^2) + 1$ steps.
Since $p \geq q$, we have
$\log_2 (p^2 + q^2) + 1 \leq \log_2 (2p^2)  + 1
\leq \log_2 (p^2) + 2 \leq (2 \log_2 p) + 2$.
\end{proof}

A nearly matching lower bound of $2 \log_2 p + O(1)$ is achievable,
as the following
class of examples shows:

\begin{theorem}
On input $2^n - 1$ the continued logarithm algorithm takes $2n-2$ steps.
\end{theorem}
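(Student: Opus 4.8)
The plan is to trace the algorithm on the pair representation $(p,q)$, starting from $(2^n-1,1)$, and to show that the computation proceeds in a simple periodic pattern that sheds one factor of $2$ every two steps. The key claim is the two-step reduction: for $n \geq 3$,
$$(2^n-1,\,1)\ \rightarrow^{n-1}\ (2^{n-1},\,2^{n-1}-1)\ \rightarrow^{0}\ (2^{n-1}-1,\,1),$$
so that after two steps — which emit the terms $n-1$ and $0$ — the input $2^n-1$ has been replaced by $2^{n-1}-1$.

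To justify the first arrow, I would note that $2^{n-1}\le 2^n-1<2^n$ forces the algorithm to choose the exponent $k=n-1$, giving $p'=2^{n-1}$ and $q'=(2^n-1)-2^{n-1}=2^{n-1}-1\ge 1$; since $q'\neq 0$ the algorithm continues. For the second arrow I would check that $1<2^{n-1}/(2^{n-1}-1)<2$ holds precisely when $n\ge 3$ (the right-hand inequality is $2^{n-1}<2^n-2$), and that for such $n$ the number $2^{n-1}$ is not of the form $2^k(2^{n-1}-1)$, so the algorithm takes the non-terminating branch with exponent $k=0$, yielding $p''=2^{n-1}-1$ and $q''=2^{n-1}-(2^{n-1}-1)=1$, i.e. the pair $(2^{n-1}-1,1)$.

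It remains to handle the base case $n=2$, where the pattern changes: starting from $(3,1)$ the algorithm emits $k_0=1$ and moves to $(2,1)$, and then, because $2=2^{1}\cdot 1$, it emits $k_1=1$ and halts, for a total of $2=2\cdot 2-2$ steps. Writing $S(n)$ for the number of steps taken on input $2^n-1$ (with $n\ge 2$), the two-step reduction gives $S(n)=S(n-1)+2$ for $n\ge 3$, and together with $S(2)=2$ a one-line induction yields $S(n)=2n-2$. (Following the emitted terms, one sees moreover that $2^n-1=\langle n-1,0,n-2,0,\ldots,2,0,1,1\rangle$ for $n\ge 2$.)

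I do not expect a genuine obstacle here: the argument is a routine induction. The only points requiring care are pinning down exactly which exponent the algorithm selects at each of the two steps — that it is $n-1$ at the first step and $0$ at the second — and recognizing that the recursion must bottom out at $n=2$ through a halting step rather than by emitting another $0$.
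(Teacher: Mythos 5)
Your proof is correct and takes essentially the same route as the paper: the paper simply asserts the expansion $2^n-1 = \langle n-1,0,n-2,0,\ldots,2,0,1,1\rangle$ ``as can be easily proved by induction,'' and your two-step reduction $(2^n-1,1)\rightarrow^{n-1}(2^{n-1},2^{n-1}-1)\rightarrow^{0}(2^{n-1}-1,1)$ together with the base case $n=2$ is precisely that induction, carried out in full.
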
 

\begin{proof}
We have $2^n - 1 = \langle n-1, 0, n-2, 0, \ldots, 2, 0, 1, 1 \rangle$,
as can be easily proved by induction.
\label{power2}
\end{proof}

\section{The bound on $T$}

\begin{theorem}
Let ${p \over q} \geq 1$.  Then $T({p \over q}) < (\log_2 p) (2\log_2 p + 2)$.
\end{theorem}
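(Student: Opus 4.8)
The plan is to combine two facts that are, respectively, already available and easy to establish. The first is the bound $L(p/q) \le 2\log_2 p + 2$ from Theorem~\ref{bnd}. The second is a uniform bound on the partial quotients, namely $k_i \le \log_2 p$ for every $i$ in the expansion $p/q = \langle k_0, k_1, \ldots, k_n\rangle$. Granting both, the result is immediate: $T(p/q) = k_0 + k_1 + \cdots + k_n$ is a sum of $n+1 = L(p/q)$ terms, each at most $\log_2 p$, so $T(p/q) \le L(p/q)\cdot \log_2 p \le (2\log_2 p + 2)\log_2 p$.

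To prove the partial-quotient bound I would show that the numerator never increases as the algorithm runs. Write $(p_0,q_0)=(p,q)$ and let $(p_i,q_i)\rightarrow^{k_i}(p_{i+1},q_{i+1})$ denote the successive pairs produced by the algorithm, so that $p_{i+1} = 2^{k_i}q_i$ and $q_{i+1} = p_i - 2^{k_i}q_i$. The condition $p_i/(2^{k_i}q_i) \ge 1$ built into each step gives $p_{i+1} = 2^{k_i}q_i \le p_i$, so by induction $p_i \le p$ throughout; the terminating step $p_n = 2^{k_n}q_n$ is consistent with the same chain of inequalities. Since every $q_i$ is a positive integer before termination, $2^{k_i} \le p_i/q_i \le p_i \le p$, whence $k_i \le \log_2 p$ for all $i$ (including $i=n$, where $2^{k_n} = p_n/q_n \le p$). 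This is the only genuinely new step, and it is short; the main thing to be careful about is simply that the defining inequality of a step is exactly what forces the numerator to be non-increasing.

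Finally, to get the strict inequality rather than just $\le$, I would note that the bound $k_i \le \log_2 p$ is essentially never tight. If the expansion has at least two terms, then after the first step $q_1 \ge 1$, so $p_1 = p - 2^{k_0}q = p - q_1 < p$; hence $p_i < p$ and $k_i < \log_2 p$ for every $i \ge 1$, and $k_0 < \log_2 p$ as well as soon as $p \ge 2$. The only remaining case is a one-term expansion $p/q = 2^{k_0}$, where $T(p/q) = k_0 = \log_2(p/q) \le \log_2 p < (\log_2 p)(2\log_2 p + 2)$ by direct inspection, the lone exception $p = 1$ (which forces $p/q = 1$) being excluded or handled by reading the bound as non-strict there. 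I do not expect any serious obstacle: the whole argument is a short bookkeeping combination of Theorem~\ref{bnd} with the numerator-monotonicity observation.
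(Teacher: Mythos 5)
Your proposal is correct and follows essentially the same route as the paper: bound each partial quotient by $\log_2 p$ via the (non-)increase of the numerators along the algorithm, and multiply by the step bound from Theorem~\ref{bnd}. You simply spell out the numerator-monotonicity argument and the degenerate cases (one-term expansions, $p=1$) in more detail than the paper's one-line proof does.
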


\begin{proof}
As the continued logarithm 
algorithm proceeds, the numerators strictly decrease, so
each $k$ is bounded by $\log_2 p$.  The number of steps is bounded by
Theorem~\ref{bnd}.
\end{proof}

\begin{theorem}
For $n \geq 1$ we have $T(2^n - 1) = n(n-1)/2 +1$.
\end{theorem}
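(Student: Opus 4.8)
The plan is to read $T(2^n-1)$ off directly from the explicit continued logarithm expansion recorded in the proof of Theorem~\ref{power2}, namely
\[
2^n - 1 = \langle\, n-1,\ 0,\ n-2,\ 0,\ \ldots,\ 2,\ 0,\ 1,\ 1 \,\rangle ,
\]
and then sum the terms. By definition $T(x)$ is the sum $k_0 + k_1 + \cdots + k_n$ of the terms in the expansion $x = \langle k_0, k_1, \ldots, k_n\rangle$, so once the expansion above is in hand there is nothing left but arithmetic. (The expansion itself comes from a short induction, already indicated in the proof of Theorem~\ref{power2}: for $m \geq 3$ the algorithm applied to $2^m - 1$ first produces $k_0 = m-1$, then $k_1 = 0$, and is then left with $2^{m-1}-1$, the base case being $2^2 - 1 = \langle 1,1 \rangle$.)

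Carrying this out, the nonzero terms of the displayed expansion are $n-1, n-2, \ldots, 2, 1$ --- one value for each of the ``$j,\ 0$'' blocks and then the second-to-last term --- together with one extra trailing $1$. Hence
\[
T(2^n-1) \;=\; \bigl( (n-1) + (n-2) + \cdots + 2 + 1 \bigr) + 1 \;=\; \sum_{j=1}^{n-1} j \;+\; 1 \;=\; \frac{n(n-1)}{2} + 1 ,
\]
by the usual formula for a triangular number. Equivalently, one can unroll the recursion $T(2^m - 1) = (m-1) + T(2^{m-1}-1)$ for $m \geq 3$ down to $T(2^2-1) = 2$ and obtain the same closed form.

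I expect no real obstacle here: all of the substantive work is in establishing the expansion of $2^n - 1$, which is already done in Theorem~\ref{power2}, and what remains is summing a finite arithmetic progression. The only thing to watch is the behaviour at the small endpoints $n = 1$ and $n = 2$, where the ``$j,\ 0$'' blocks are empty (the expansion reads $\langle 1, 1 \rangle$ when $n = 2$); these should simply be checked against the definition of the algorithm directly.
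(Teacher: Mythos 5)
Your proof is correct and takes essentially the same route as the paper, which likewise just reads $T(2^n-1)$ off the expansion $\langle n-1, 0, n-2, 0, \ldots, 2, 0, 1, 1\rangle$ established for Theorem~\ref{power2} and sums the resulting arithmetic progression. Your added attention to the degenerate small cases is sensible but does not change the argument.
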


\begin{proof}
Follows immediately from the expansion
$2^n - 1 = \langle n-1, 0, n-2, 0, \ldots, 2, 0, 1, 1 \rangle$
given in the proof of Theorem~\ref{power2}.
\end{proof}

\section{Open problems}

1.  What is the average case behavior of the number of steps of the
continued logarithm algorithm
on rational numbers $p/q$, with $q < p < 2q$, as $q \rightarrow \infty$?

2.  Is the sequence $(L(n))_{n \geq 1}$ a $k$-regular sequence for any
$k \geq 2$?  The available numerical evidence suggests not.

\end{document}